\documentclass[11pt]{amsart}

\usepackage[T1]{fontenc}

\usepackage{latexsym,enumerate}

\usepackage{mathrsfs}
\usepackage{amsmath,amssymb,amsthm,amsfonts,latexsym}
\usepackage{pstricks, pst-node, pst-text, pst-3d}
\usepackage[bookmarks]{hyperref}
\usepackage{srcltx}

\def\cal{\mathcal}


\def\adh#1{\overline{#1}}

\setlength{\textwidth}{150mm} \setlength{\textheight}{215mm}

\setlength{\oddsidemargin}{.25in}

\setlength{\evensidemargin}{.25in} \setlength{\topmargin}{-0.2cm}

\setlength{\parskip}{.05in} \setlength{\hoffset}{-0.4cm}

\setlength{\headheight}{12pt} \setlength{\headsep}{25pt}

\let\=\partial

\newtheorem {pro}{Proposition}[section]
\newtheorem {thm}[pro]{Theorem}
\newtheorem{lem}[pro]{Lemma}

\theoremstyle{definition}
 \newtheorem {rem}[pro]{Remark}
\newtheorem {dfn}[pro]{Definition}

\newtheorem {obs}[pro]{Observation}




\newcommand{\R}{\mathbb{R}}

\newcommand{\Cc}{\mathscr{C}}

\newcommand{\hn}{\mathcal{H}}

\newcommand{\bou}{ {\bf B}}

\newcommand{\ep}{\varepsilon}

\newcommand{\tra}{\mathbf{tr}}

\newcommand{\pa}{\partial}

\newcommand{\brzeg}{\partial^{1}_{reg}}
\newcommand{\skraj}{\partial}

\title[]{Uniform Poincar\'e inequality in o-minimal structures}


\makeatletter

\@namedef{subjclassname@2020}{%
  \textup{2020} Mathematics Subject Classification}


\makeatother

\author[A. Valette and G. Valette]{Anna Valette and Guillaume Valette}

\address[A. Valette]{Uniwersytet Jagiello\'nski, Katedra Teorii Optymalizacji i Sterowania, Wydzia\l\ Matematyki i Informatyki, ul. S. \L ojasiewicza 6, 30-348 Krak\'ow, Poland}
 \email{anna.valette@im.uj.edu.pl}
\address[G. Valette]{Uniwersytet Jagiello\'nski, Instytut Matematyki, ul. S. \L ojasiewicza 6, 30-348 Krak\'ow, Poland}
 \email{guillaume.valette@im.uj.edu.pl}


\keywords{Poincar\'e inequality, Sobolev spaces, singular domains, subanalytic functions, o-minimal structures}

\thanks{Research partially supported by the Narodowe Centrum Nauki grant  2021/43/B/ST1/02359.}

\subjclass[2020]{26D10, 32B20, 46E35}

\begin{document}
\begin{abstract}
We first define the trace on a domain $\Omega$ which is definable in an o-minimal structure. We then show that every function $u\in W^{1,p}(\Omega)$ vanishing on the boundary in the trace sense satisfies Poincar\'e inequality. We finally show, given a definable family of domains $(\Omega_t)_{t\in \R^k}$, that the constant of this inequality remains bounded,  if so does the volume of $\Omega_t$.
\end{abstract}
\maketitle
\section{Introduction}


Poincar\'e type inequalities are very valuable tools in the theory of PDE as well as in variational or numerical analysis. There are mainly two directions in the contemporary research: characterizing domains on which Poincar\'e type inequalities holds, see for instance \cite{mazya} or  \cite{ericsson} and the literature therein, or focus on the constant and find  uniform bounds for certain classes of domains. For the latter issue, parametric version of Poincar\'e inequalities are very useful \cite{boulk, unidom, ruiz, thomas}.

The aim of this note is to provide a proof of a parametric version of Poincar\'e-Friedrichs inequality in the o-minimal setting, see Theorem \ref{main}.
Definable sets can be considered as generalizations of  semialgebraic or subanalytic sets.   The theory of o-minimal structures, which  sits at the intersections of model theory, geometry, and analysis, is very adequate to perform analysis on singular sets. Thanks to their "tameness" the  o-minimal structures provide a sufficiently large playground for applicable finite dimensional variational and numerical analysis (especially in semialgebraic setting).
  
This is a part of our  project to carry out the theory of Sobolev spaces on definable domains, with possibly singular boundary. In foregoing papers, we proved a version of  Poincar\'e-Wirtinger inequality on subanalytic sets \cite{pi_vv}, studied  the trace operator on  $W^{1,p}(M)$ in the case where $p$ is large and $M$ is a bounded subanalytic submanifold of $\R^n$ \cite{trace}, and showed density of compactly supported functions in the  kernel of the trace operator for such Sobolev spaces.   Such a manifold $M$ may of course admit singularities in its closure which are not metrically conical. Some other interesting inequalities on singular subanalytic domains were investigated in \cite{bos1, bos2, thierry}. The advantage of working with o-minimal structures is that no extra ad hoc assumption on the metric geometry of the domains or on their boundaries is needed.

Some generalizations of Poincar\'e-Friedrichs inequality were given for example in \cite{pf, pf-w12}. Although the boundary of a domain which is definable in an o-minimal structure may admit singularities,
it must be an at least  $\Cc^1$ manifold almost everywhere.  We rely on it to  define a trace operator which is continuous in the $L^p_{loc}$ topology on the complement of a negligible subset of the boundary of the domain. This enables us to show a version of Poincar\'e-Friedrichs inequality which is valid on every definable domain $\Omega$ which is bounded in one direction for every element $u\in W^{1,p}(\Omega)$ that vanishes on the boundary in the trace sense.

 It is a very interesting problem to study the extent to which the best constant of Poincar\'e inequality is related to the geometry of the domain, especially  when singularities arise in its boundary. We show that there exists a uniform constant for every definable family of domains (i.e., definable with respect to parameters) of bounded volume.

We briefly recall that an \textbf{o-minimal structure} expanding the real  field $(\R, +,\cdot)$ is the data for every $n$ of  a Boolean algebra 
 $\mathcal{S}_n$  of subsets of $\R^n$   containing  all the algebraic subsets of $\R^n$ and satisfying
the following axioms:
\begin{enumerate}
\item If $A\in\mathcal{S}_m$, $B\in\mathcal{S}_n$, then $A\times B\in\mathcal{S}_{m+n}$; \item
If $\pi:\R^n\times \R\to \R^n$ is the natural projection and $A\in\mathcal{S}_{n+1}$, then $\pi(A)\in\mathcal{S}_n$;
 \item $\mathcal{S}_1$ is nothing else but all the finite unions of points and intervals.
\end{enumerate}
The elements of $\mathcal{S}_n$ are called the \textbf{definable}  subsets of ${\R}^n$.

Given a function $\xi:A\to \R$, with $A\subset \R^n$, $\Gamma_\xi$ will stand for the graph of $\xi$. If $\xi':A\to \R$ is another function, we set
$$(\xi,\xi'):=\{(x,y)\in A \times \R:\xi(x)<y < \xi'(x)\}.$$
 From now, we fix an o-minimal structure.

Let us recall the inductive definition of cells of $\R^n$. Every  subset of $\R^0=\{0\}$  is a cell. A definable subset $C$ is a cell of $\R^{n}$  if there is a cell $D$ of $\R^{n-1}$ such that one of the following conditions holds:
  \begin{enumerate}
   \item\label{item_type_1}  $C=\Gamma_{\xi}$ with  $\xi$ definable $\Cc^1$ function on $D$.
   \item\label{item_type_2}  $C=(\xi,\xi')$, where $\xi$ is either equal to $-\infty$ or  a $\Cc^1$   definable function on $D$, and  $\xi'$ is either equal $+\infty$ or a $\Cc^1$ definable   function  on $D$ satisfying $\xi< \xi'$.
  \end{enumerate}
A cell decomposition of $\R^n$ is a finite partition of $\R^n$ into cells which satisfies some extra inductive properties (see for instance \cite{costeomin,dries_omin}). We say that a cell decomposition is compatible with some given sets $A_1,\dots, A_m$ if each of the sets $A_i$ is a finite sum of cells of this cell decomposition. One of the main features of o-minimal structures is to always admit cell decompositions compatible with finitely many given definable sets.

\noindent{\bf Notations.} Given  $Z\subset\R^n$ and a function $u:Z\to \R$, as well as $p\in [1,\infty)$ and an integer $k$, we denote by $||u||_{L^p(Z,\hn^k)}$ the (possibly infinite) $L^p$ norm of $u$, where $\hn^k$ stands for the $k$-dimensional Hausdorff measure. As usual, we denote by $L^p(Z,\hn^k)$ the set of functions on $Z$ for which  $||u||_{L^p(Z,\hn^k)}$ is finite and,  by $L_{loc}^p(Z,\hn^k)$, we denote the space of functions $f:Z\to \R$ such that every point $x\in Z$ has a neighborhood $U$ in $Z$ for which the restriction $f_{|U}$ belongs to $L^p(U,\hn^k)$. We will then consider the $L^p_{loc}$- convergence on $Z$ in the following sense:  we say that a sequence of functions $f_i:Z\to\R$ converges to $f$ in  the $L^p_{loc}$- topology (and we denote it by $f_i\overset{L^p_{loc}}\longrightarrow f$) if every $x\in Z$ has a neighborhood $U$ in $ Z$ such that the restrictions ${f_i}_{|U}$ converge to $f_{|U}$ in the $L^p(U,\hn^k)$ norm.

 If  $\Omega\subset \R^n$ is an open set, we denote by $L^p(\Omega)$ the $L^p$ measurable functions on $\Omega$ (with respect to the Lebesgue measure) and by $||u||_{L^p(\Omega)}$ the $L^p$ norm.  We then let $$W^{1,p}(\Omega):= \{u\in L^p(\Omega),\; |\partial u| \in L^p(\Omega)\}$$ denote the Sobolev space, where $\pa u$ stands for the gradient of $u$ in the sense of distributions.
 It is well-known that this space, equipped with the norm
$$||u||_{W^{1,p}(\Omega)}:=||u||_{L^p(\Omega)}+| |\partial u||_{L^p(\Omega)}$$ is a Banach space, in which
  $\Cc^\infty(\Omega)$ is dense  for all $p\in [1,\infty)$.\\

 We put $\skraj\Omega:=\adh{\Omega}\setminus \Omega$ and we will denote by
 $\brzeg\Omega$ the set of points of $\skraj\Omega$ at which this set is a $\Cc^1$ submanifold of $\R^n$ of dimension  $(n-1)$.

We write  $|.|$  for the euclidean norm.  Given $x\in  \R^n$ and $\ep>0$, we denote by $\bou(x,\ep)$ 
the open ball of radius $\ep$ centered  at $x$ (for the euclidean norm) and by ${\bf S}^{n-1} $ the unit sphere (centered at the origin). The euclidean distance  of  a point $x\in\R^n$ to a subset $A\subset\R^n$ is denoted by $dist(x,A)$ and the canonical basis of $\R^n$ by $e_1,\dots, e_n$.  


\section{Trace operator}
In this section we  define a trace operator on the boundary of a given open definable set $\Omega\subset \R^n$. As definable sets are piecewise $\Cc^1$ submanifolds of $\R^n$, this actually follows from the classical theory.

For this purpose, we shall make use of the set $W^{1,p}_{loc}(\Omega \cup\brzeg\Omega)$,  which is defined as the  set constituted by all the distributions $u$ on $\Omega$ such that every  $x\in \Omega\cup\brzeg\Omega$ has a neighborhood $U$ in $\R^n$ such that  the restriction $u_{|U\cap \Omega}$ belongs to $W^{1,p}(U\cap \Omega)$. We then say that $(u_i)$ converges to $u\in W^{1,p}_{loc}(\Omega \cup\brzeg\Omega)$ if every $x\in \Omega\cup\brzeg\Omega$ has a neighborhood $U$ in $\R^n$ such that  the restriction $u_{i|U\cap \Omega}$ converges to $u_{|U\cap \Omega}$ in $W^{1,p}(U\cap \Omega)$. This defines the $W^{1,p}_{loc}$-topology.

\begin{dfn}\label{dfn_embedded}
We say that $\Omega$ is {\bf connected at $x\in \skraj\Omega$}\index{connected at $x$} if  $\bou(x,\ep)\cap \Omega$ is connected for all  $\ep>0$ small enough.
We say that $\Omega$ is {\bf normal} if it is connected at each $x\in \skraj\Omega$.
\end{dfn}
\begin{obs}\label{dense}If $\Omega$ is normal then
 $\Cc^1(\Omega\cup\brzeg\Omega)$ is dense in $W^{1,p}_{loc}(\Omega \cup\brzeg\Omega)$.
\end{obs}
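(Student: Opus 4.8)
The plan is to build a single sequence $u_i\in\Cc^1(\Omega\cup\partial\Omega)$ converging to a given $u\in W^{1,p}_{loc}(\Omega\cup\partial\Omega)$ by patching local approximations together with a partition of unity. Since the $W^{1,p}_{loc}$-topology is checked neighborhood by neighborhood, I would first cover $\Omega\cup\partial\Omega$ by a locally finite family of balls: around each interior point a ball contained in $\Omega$, and around each $x\in\partial\Omega$ a ball $\bou(x,\ep)$ of the shape described in the next paragraph. Taking a subordinate $\Cc^\infty$ partition of unity $(\phi_\alpha)$, it suffices to approximate each $\phi_\alpha u$ on its chart by $\Cc^1$ functions $v_\alpha^i$ supported in that chart: on any test neighborhood meeting only finitely many charts, the product rule $\|\phi_\alpha w\|_{W^{1,p}}\le C_\alpha\|w\|_{W^{1,p}}$ turns the finitely many local convergences $v_\alpha^i\to\phi_\alpha u$ into $W^{1,p}$-convergence of the glued sum $\sum_\alpha v_\alpha^i$. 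On the charts lying inside $\Omega$ the approximation is immediate by mollification, so the whole issue is the behavior at boundary points.

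The geometric input is the local shape of $\Omega$ at a point $x\in\partial\Omega$. By the definition of $\partial\Omega$, the set $\delta\Omega$ coincides near $x$ with a $\Cc^1$ hypersurface $H$, which splits a small ball $\bou(x,\ep)$ into two connected open components $B^+$ and $B^-$. As $H\subset\delta\Omega$ is disjoint from $\Omega$ and no other point of $\delta\Omega$ lies in $\bou(x,\ep)$, each of $B^+,B^-$ avoids $\delta\Omega$ entirely, so by connectedness each lies either inside $\Omega$ or inside the complement of $\adh{\Omega}$. Since $x\in\adh{\Omega}$, at least one of them is contained in $\Omega$; and normality (Definition \ref{dfn_embedded}) forbids both, for otherwise $\bou(x,\ep)\cap\Omega=\bou(x,\ep)\setminus H$ would be disconnected. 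Hence $\bou(x,\ep)\cap\Omega$ equals exactly one side of $H$ and is a genuine $\Cc^1$ domain. This is the only step where the normality hypothesis is used.

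On such a chart I would straighten $H$ by a $\Cc^1$ diffeomorphism $\Phi$ onto a piece of the hyperplane $\{y_n=0\}$, sending $\bou(x,\ep)\cap\Omega$ onto a half-ball $\{y_n>0\}$. Because $W^{1,p}$ is stable under composition with $\Cc^1$ diffeomorphisms (the change-of-variables formula for the distributional gradient holding a.e.), $v:=u\circ\Phi^{-1}$ lies in $W^{1,p}$ on the half-ball. I would then run the standard translate-and-mollify scheme: replace $v$ by $v(\,\cdot+\tau e_n)$ to push its domain of definition across the flat boundary, and convolve with a mollifier of scale $\rho<\tau$; this yields $\Cc^\infty$ functions on a neighborhood of the closed half-ball converging to $v$ in $W^{1,p}$ as $\tau,\rho\to 0$. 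Pulling them back by $\Phi$ produces the sought $\Cc^1$ approximations of $u$ up to $\partial\Omega$. Note that only $\Cc^1$ regularity survives the pullback, which is exactly what membership in $\Cc^1(\Omega\cup\partial\Omega)$ demands, so the mere $\Cc^1$ smoothness of $\Phi$ is no loss.

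The main obstacle is the local structure statement of the second paragraph: extracting, from the bare definition of $\partial\Omega$ together with normality, that $\Omega$ genuinely lies on one side of $H$, so that the classical density theory applies verbatim. Once this one-sidedness is secured, the remaining work (the translate-and-mollify step and the partition-of-unity bookkeeping needed to assemble a single sequence converging in the purely local $W^{1,p}_{loc}$-topology) is routine, provided one keeps the multipliers $\phi_\alpha$ fixed and uses local finiteness so that only finitely many error terms contribute on each test neighborhood.
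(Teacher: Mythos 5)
Your proposal is correct and takes essentially the same route as the paper: a partition of unity plus local straightening of $\partial\Omega$ reduces everything to $\Cc^1$ approximation in $W^{1,p}$ of a half-space, which is the classical translate-and-mollify result (the paper simply cites \cite[Section III.2.5]{bf} for that step). The only difference is one of detail: you spell out what the paper leaves implicit in ``up to a partition of unity and a coordinate system of $\partial\Omega$,'' namely that normality forces $\Omega$ to lie on exactly one side of $\delta\Omega$ near each point of $\partial\Omega$.
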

\begin{proof}
Up to a partition of unity and a coordinate system of $\brzeg\Omega$, we are reduced to finding  a $\Cc^1$  approximation of 
a given function $u\in W^{1,p}(H_+)$, where $H_+=\{(x_1,\dots,x_n)\in \R^n: x_n> 0\}$ is the positive half-space, for which the result is well-known \cite[Section III.2.5]{bf}.\end{proof}
\begin{obs}\label{p2}
Let  $(f_i)\subset\Cc^1(\Omega\cup \brzeg\Omega) $ be a sequence.

\begin{enumerate}[(i)]
\item
If $(f_i)$ is converging  in the $W^{1,p}_{loc}$-topology, then $({f_i}_{|\brzeg\Omega})$ is also $L^p_{loc}$-converging.
\item If $(f_i)$ converges to $0$ in the $W^{1,p}_{loc}$-topology, then ${f_i}_{|\brzeg\Omega}\overset{L^p_{loc}}\longrightarrow 0$.
\end{enumerate}
\end{obs}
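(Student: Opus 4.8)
The statement is local in nature, so the plan is to reduce, near each boundary point, to the classical trace inequality on a half-space. I would start from the definition of $L^p_{loc}$-convergence on $\partial\Omega$: to prove (i) (resp.\ (ii)) it suffices to exhibit, for each $x\in\partial\Omega$, a neighborhood $U$ of $x$ in $\partial\Omega$ on which $({f_i}_{|U})$ is Cauchy (resp.\ tends to $0$) in $L^p(U,\hn^{n-1})$. Fixing such an $x$, the $W^{1,p}_{loc}$-hypothesis furnishes a neighborhood $V$ of $x$ in $\R^n$ on which $({f_i}_{|V\cap\Omega})$ converges (resp.\ converges to $0$) in $W^{1,p}(V\cap\Omega)$.

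Next I would flatten the boundary. Since, by definition of $\partial\Omega$, the set $\delta\Omega$ is a $\Cc^1$ submanifold of dimension $n-1$ near $x$, after shrinking $V$ there is a $\Cc^1$ diffeomorphism $\Phi\colon V\to\Phi(V)$ sending $V\cap\delta\Omega$ into the hyperplane $\{x_n=0\}$ and $V\cap\Omega$ into the half-space $H_+=\{x_n>0\}$ (should $\Omega$ meet both sides of $\delta\Omega$ near $x$, I would argue on each side separately). Because $\Phi$ and $\Phi^{-1}$ are $\Cc^1$ with derivatives bounded on the relevant compact sets, $\Phi$ transforms the $L^p$, $W^{1,p}$ and $\hn^{n-1}$ norms into equivalent ones up to multiplicative constants. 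This reduces everything to the classical estimate: there is $C>0$ such that every $g\in\Cc^1(\overline{H_+})$ supported in a fixed compact set satisfies
$$||g(\cdot,0)||_{L^p(\{x_n=0\})}\le C\,||g||_{W^{1,p}(H_+)},$$
obtained by writing $|g(x',0)|^p=-\int_0^\infty \pa_t|g(x',t)|^p\,dt$ and applying H\"older's inequality (cf.\ \cite[Section III.2.5]{bf}).

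Finally, inserting a cutoff function supported in $V$ and equal to $1$ near $x$ (whose contribution is absorbed by the $W^{1,p}$ norm) and transporting the inequality back through $\Phi$, I would obtain a neighborhood $U$ of $x$ in $\partial\Omega$ and a constant $C_x$ with
$$||h_{|U}||_{L^p(U,\hn^{n-1})}\le C_x\,||h_{|V\cap\Omega}||_{W^{1,p}(V\cap\Omega)}$$
for every $h\in\Cc^1(\Omega\cup\partial\Omega)$. Taking $h=f_i$ gives (ii) at once, and taking $h=f_i-f_j$ shows $({f_i}_{|U})$ to be Cauchy in the complete space $L^p(U,\hn^{n-1})$, hence convergent, which gives (i). The step I expect to require the most care is the flattening: one must verify that the $\Cc^1$ (and no smoother) change of variables preserves the Sobolev and Hausdorff-measure norms with controlled constants, and that the cutoff contributes only terms already dominated by the $W^{1,p}$ norm. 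The scenario where $\Omega$ lies on both sides of $\delta\Omega$ near $x$ turns out harmless, since the continuity of $f_i$ on $\Omega\cup\partial\Omega$ makes the trace unambiguous and the inequality can be applied on either side.
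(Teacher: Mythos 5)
Your proposal is correct and follows essentially the same route as the paper's proof: localize at a point of $\partial\Omega$, establish a local trace inequality valid for all functions in $\Cc^1(\Omega\cup\partial\Omega)$, then apply it to $f_i$ (giving (ii)) and to $f_i-f_j$ together with the completeness of $L^p(U\cap\partial\Omega,\hn^{n-1})$ (giving (i)). The only difference is that the paper quotes the local inequality directly from \cite[Proposition III.2.18]{bf}, in the interpolated form $||\varphi||_{L^p(U\cap\partial\Omega,\hn^{n-1})}\le C\,||\varphi||_{L^p(U\cap\Omega)}^{1-1/p}||\varphi||_{W^{1,p}(U\cap\Omega)}^{1/p}$, whereas you re-derive an (equally sufficient) additive version by flattening the boundary and integrating along the normal direction, which is in substance the proof of that cited proposition.
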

\begin{proof}
Fix $x\in\brzeg\Omega$. There are a  relatively compact neighborhood $U$ of $x$ in $\Omega\cup \brzeg\Omega$ and a constant $C>0$ such that  for all $\varphi\in\Cc^1(\Omega\cup \brzeg\Omega) $ the following inequality holds (for instance by \cite[Proposition III.2.18]{bf}) 
\begin{equation}\label{conv}
||\varphi||_{L^p(U \cap\brzeg\Omega,\hn^{n-1} )}\le C||\varphi||^{1-1/p}_{L^p(U\cap \Omega)}\;||\varphi||^{1/p}_{W^{1,p}(U\cap \Omega)}.
\end{equation}
Since the set $U$ is relatively compact in ${\Omega}\cup \brzeg\Omega$, we know that  if $(f_i)$ is convergent in $W^{1,p}_{loc}(\Omega\cup \brzeg\Omega)$ then $(f_{i | U\cap \Omega})$ is convergent in  $W^{1,p}(U\cap \Omega)$. 
We conclude from (\ref{conv}) that the sequence $({f_i}_{|U \cap \brzeg\Omega})$ is Cauchy in $ L^p(U\cap \brzeg\Omega,\hn^{n-1})$,
which  establishes $(i)$. This also yields  that if $(f_i)$ converges to $0$ in the $W^{1,p}_{loc}$-topology then ${f_i}_{|\brzeg\Omega}\overset{L^p_{loc}}\longrightarrow 0$.
\end{proof}

To define our trace operator in the case where $\Omega $ is normal 
for $\varphi\in\Cc^1(\Omega\cup\brzeg\Omega)$, let now $\tra(\varphi):=\varphi_{|\brzeg\Omega}$. By Observations \ref{dense} and \ref{p2}, this mapping extends to a mapping $$\tra_{\brzeg\Omega}:{W}^{1,p}(\Omega)\to L^p_{loc}(\brzeg\Omega,\hn^{n-1}),$$
which is continuous in the $W^{1,p}_{loc}(\Omega \cup\brzeg\Omega)$-topology.

Let us now define the trace in the case where $\Omega$ is not necessarily normal. There is a finite partition $\mathcal{P}$ of $\brzeg\Omega$ into definable sets such that every $(n-1)$-dimensional element $S$ of $\mathcal{P}$ is open in $\brzeg\Omega$ and the number of connected components of $\bou(x,\ep)\cap \Omega$  (for $\ep>0$ sufficiently small) is the same ($1$ or $2$) for all $x\in S$.   For instance, such a partition is given by the images of the open simplices of a $\Cc^0$ definable triangulation \cite{costeomin}. 
We start by defining $\tra_S$, where $S$ is an $(n-1)$-dimensional element of $\mathcal{P}$. 

\noindent {\it \underline {First case:} $\Omega$ is connected at the points of $S$.} In this case, there is a neighborhood $V$ of $S$ in $\R^n$ such that $V \cap \Omega$ is normal and we can define:
$$\tra_S:W^{1,p}(\Omega)\to L^p_{loc} (S,\hn^{n-1})^2,\quad u\mapsto (\tra_S \,u_{| V\cap \Omega},0).$$
Here, we add  the zero function as second component since there is only one connected component.  There will be a possibly nonzero component in the second case.

\noindent {\it \underline {Second case:} $\Omega$ fails to be connected at the points of $S$.} Let $V$ be a neighborhood of $S$ such that $V\cap\skraj\Omega=S$. If $V$ is sufficiently small, the set $V\cap\Omega$ has two connected components $\Omega_1$ and $\Omega_2$ such that $\Omega_1\cup S$ and $\Omega_2\cup S$ are both $\Cc^1$ manifolds with boundary. 

Let us denote by $\tra^1_S:W^{1,p}(\Omega_1)\to L^p_{loc}(S,\hn^{n-1})$ and $\tra_S^2:W^{1,p}(\Omega_2)\to L^p_{loc}(S,\hn^{n-1})$ the respective trace operators resulting from the above Observations. We then define $$\tra_S:W^{1,p}(\Omega)\to L^p_{loc} (S,\hn^{n-1})^2,\quad u\mapsto (\tra^1_S \,u_{| \Omega_1},\tra^2_S\, u_{|\Omega_2}).$$
Of course, this mapping depends on the way we have enumerated the connected components of $V\cap \Omega$, but, up to a possible permutation of the components, it is independent of any choice. In particular,  the  kernel of this mapping  is independent of our choices.

Finally, we define $\tra \,u$ by $\tra\, u(x)=\tra_S u (x)$, for $x\in S$, for each $S\in \mathcal{P}$ such that $\dim S=n-1$.   Since it is induced by the respective trace operators of the connected components of the germ of $\Omega$ at every point of   $x\in \brzeg\Omega$, it is an $L^p_{loc}$ mapping on this set.\\
 We let $$W^{1,p}(\Omega,\brzeg\Omega):= \ker\tra .$$

\begin{rem}\label{rem_trace}
It is easy to produce examples of definable domains $\Omega$ admitting a function $u\in W^{1,p}(\Omega)$ such that $\tra\, u$ is not $L^p$ on the boundary. Let for $k>2$, 
 $$\Omega_k:=\{(x,y)\in (0,1)^2:y < x^k\},$$ and let $u(x,y):=\frac{1}{x}.$ 
 Clearly,  
 $u\in W^{1,p}(\Omega_k)$, for $p\in [1,\frac{k}{2}]$, while $\tra\, u$ is not $L^p$ on $\brzeg\Omega_k$.  The results of \cite{trace} however yield that there is $p_0$ such that for $p\ge p_0$ and $u\in W^{1,p}(\Omega)$, $\tra\, u$ is always $L^p$, if $\Omega$ is a subanalytic bounded domain (the proof actually applies to any bounded domain which is definable in a polynomially bounded o-minimal structure expanding the real field). The real number $p_0$ depends on the Lipschitz geometry of $\skraj\Omega$.
\end{rem}

\section{Uniform bound for the Poincar\'e constants of definable families}
We say that $(A_t)_{t\in\R^k}$ is a {\bf definable family} if the set 
$$A:=\bigcup_{t\in\R^k}A_t\times \{t\}$$ is a definable subset of $\R^n\times\R^k$. 
We will sometimes regard a definable subset $A\subset\R^n\times\R^k$ as a definable family,
setting $$A_t:=\{x\in\R^n\,: (x,t)\in A\}.$$ 

Given two definable families $A\subset\R^n\times\R^k$ and $B\subset\R^m\times\R^k$, we say that $F_t:A_t \to B_t$, $t\in \R^k$, is a {\bf definable family of mappings} if the family of the graphs $(\Gamma_{F_t})_{t\in \R^k}$, is a definable family of sets of $\R^{n+m}$. 

A definable family of mappings $F_t:A_t\to B_t$, $t\in\R^k$ is {\bf uniformly Lipschitz (resp. bi-Lipschitz)} if there exists a constant $L$ such that
 $F_t$ is $L$-Lipschitz\footnote{i.e. $|F_t(x)-F_t(y)|\le L|x-y|$ for all $x,y\in A_t$}   (resp. $F_t$ is $L$-bi-Lipschitz) for all $t\in\R^k$. 

Here, we wish to emphasize that a definable family of sets (resp. mappings) is not only a family of definable sets (resp. mappings): all the fibers must glue together into a definable set (resp. mapping).  This confers to such families many uniform finiteness properties that are essential for our purpose (see \cite{costeomin, gv_livre}).

Given $X\in \mathcal{S}_n$, we denote by $(X)_{reg} $ the set of the points $x\in X$ at which  $X$ is a $\Cc^1$ submanifold of $\R^n$ (of any dimension) and by $T_x(X)_{reg}$ the tangent space to $(X)_{reg} $ at the point $x$.
We say that $\lambda \in {\bf S}^{n-1} $ is {\bf regular for $X$}  if there exists $\alpha >0$ such that for all $x\in (X)_{reg}$ :
\begin{equation}\label{eq_reg_familles}
  dist(\lambda, T_x (X)_{reg}) \geq \alpha.
 \end{equation}
We say that $\lambda$ is {\bf regular for a family $(A_t)_{t \in \R^k}$}, if there is $\alpha>0$ such that  (\ref{eq_reg_familles}) holds for $X=A_t$, for all  $t\in \R^k$  and all $x\in (A_t)_{reg}$. 

 If $\lambda\in {\bf S}^{n-1}$ is regular for $A\in {\cal S}_{n+k}$, it is of course regular for $A_t\in {\cal S}_n$ for every $t \in \R^k$, but it is indeed even stronger since 
 the  angle between the vector $\lambda$ and the tangent spaces to the fibers must then be bounded below away from zero by a positive constant {\it independent of the parameter $t$}.

Regular vectors do not always exist, even if the considered set has empty interior, as it is shown by the simple example of a circle. Nevertheless, when the considered sets have empty interior, up to a definable family of bi-Lipschitz maps, we can find such a vector:
\begin{thm}\label{proj_reg}
Let $(A_t)_{t\in \R^k}$ be a definable family of subsets of $\R^n$ such that $A_t$ has empty interior for each $t\in\R^k$. There exists a uniformly bi-Lipschitz definable family of homeomorphisms $h_t:\R^n\to\R^n$ such that the vector $e_n$ is regular for the family $(h_t(A_t))_{t\in \R^k}$. 
\end{thm}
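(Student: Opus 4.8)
The plan is to reduce the statement to a normal form: up to a uniformly bi-Lipschitz definable family of homeomorphisms, one can $\Cc^1$-stratify each $h_t(A_t)$ into pieces, every one of which is the graph of a uniformly Lipschitz definable function over the hyperplane $\R^{n-1}=\{x_n=0\}=e_n^{\perp}$. The reason this normal form suffices is a direct tangent-space computation. If $S$ is the graph of an $M$-Lipschitz $\psi$ over an open subset of a subspace $V\subseteq e_n^{\perp}$, then at every point its tangent space is $T=\{(v,d\psi(v)):v\in V\}$ with $|d\psi(v)|\le M|v|$, so writing $P_T$ for the orthogonal projection onto $T$ and using $|e_n|=1$,
$$|P_T e_n|=\sup_{0\ne v\in V}\frac{|d\psi(v)|}{\sqrt{|v|^2+|d\psi(v)|^2}}\le \frac{M}{\sqrt{1+M^2}},\qquad dist(e_n,T)=\sqrt{1-|P_Te_n|^2}\ge (1+M^2)^{-1/2}=:\alpha.$$
Since the condition $dist(e_n,\cdot)\ge\alpha$ is closed on the space of subspaces, and every tangent space $T_x(h_t(A_t))_{\reg}$ is a limit of tangent spaces to the strata, the bound persists at \emph{all} regular points, with one $\alpha$ depending only on $M$. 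Hence $e_n$ is regular for the whole family $(h_t(A_t))_{t\in\R^k}$.

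To approach this normal form I would first apply the parametrized cell decomposition theorem to $A\subset\R^n\times\R^k$, compatible with $A$, writing each $A_t$ as a finite union — of cardinality and combinatorial type independent of $t$ — of cells of $\R^n$, each of dimension $\le n-1$ because $A_t$ has empty interior. Relative to the last coordinate a cell is either a graph $\Gamma_{\xi_t}$ over a base cell $D_t\subset\R^{n-1}$ (type (\ref{item_type_1})) or a band $(\xi_t,\xi'_t)$ over $D_t$ (type (\ref{item_type_2})). Two phenomena obstruct the normal form. A band satisfies $e_n\in T_xC$, hence $dist(e_n,T_xC)=0$: it is genuinely vertical and must be tilted. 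A graph is acceptable only where $|\nabla\xi_t|$ is bounded, and by the monotonicity and preparation theorems $|\nabla\xi_t|$ can blow up only as one approaches part of the frontier $\delta D_t:=\overline{D_t}\setminus D_t$, i.e. where $\Gamma_{\xi_t}$ turns vertical — exactly the mechanism that prevents any direction from being regular for a circle.

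The heart of the proof is to build a uniformly bi-Lipschitz definable family $h_t$ curing both phenomena by creating corners along the vertical locus, the model being the radial homeomorphism carrying the circle onto the square $\{|x_1|+|x_2|=1\}$, whose open edges are $1$-Lipschitz graphs over the $x_1$-axis meeting at non-regular vertices. One is forced to mix the coordinates: a homeomorphism of the special form $(x',x_n)\mapsto(\phi_t(x'),x_n)$ cannot help, since it sends $\Gamma_{\xi_t}$ to $\Gamma_{\xi_t\circ\phi_t^{-1}}$ and, $\phi_t$ being bi-Lipschitz, preserves boundedness (and unboundedness) of the slope. I would carry out the construction by induction on $n$: after decomposition, the vertical bands and the frontier loci where gradients explode are described, in the definable coordinates supplied by preparation, by finitely many branches with controlled asymptotics; along each branch I bend the set by a definable bi-Lipschitz map supported in a shrinking neighborhood of the branch, replacing the vertical or steep part by finitely many uniformly Lipschitz graphs separated by corners, and I glue these with the inductive homeomorphisms of $\R^{n-1}$ applied to the bases $D_t$ and their frontiers into a single homeomorphism of $\R^n$.

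The main obstacle is precisely this construction \emph{with constants independent of the parameter}: both the bi-Lipschitz constant of $h_t$ and the resulting angle $\alpha$ must be uniform in $t$, and the local bending maps must patch into a globally well-defined homeomorphism of $\R^n$. Uniformity is exactly what forces the use of definable families rather than fibrewise arguments; it rests on the uniform finiteness of the cell decomposition, on the uniform definable description of the vertical and frontier loci, and on uniform preparation/Łojasiewicz estimates for the families $\xi_t,\xi'_t$, all available because $A$ is a single definable set. By contrast the mere fibrewise existence of the bending maps is elementary geometry; the difficulty is organizing the corner-creation coherently across all cells, and all parameters, at once.
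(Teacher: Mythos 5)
Your proposal establishes only the routine half of the theorem and defers the other half, which is the theorem itself. What you actually prove is correct: the tangent-space computation showing that an $M$-Lipschitz graph over a subspace of $e_n^{\perp}$ satisfies $dist(e_n,T)\ge(1+M^2)^{-1/2}$, the passage of this bound to limits of tangent spaces and hence to all points of $(h_t(A_t))_{reg}$, and the identification of the two obstructions (vertical bands, and graphs whose gradient blows up near the frontier of their base). But the statement you reduce to --- existence of a \emph{uniformly} bi-Lipschitz, \emph{definable} family $h_t$ carrying each $A_t$ into finitely many uniformly Lipschitz graphs over $e_n^{\perp}$ --- is not a normal form lying closer to the ground than the theorem; it \emph{is} the regular projection theorem, restated. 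The paragraph in which you produce $h_t$ ("along each branch I bend the set by a definable bi-Lipschitz map supported in a shrinking neighborhood of the branch \dots\ and I glue") is a plan rather than an argument, as you yourself concede when you call this construction "the main obstacle."

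The plan also glosses over the concrete reason this construction is hard: corner-creating shears are a zero-sum game. The model map tilting a vertical segment in $\R^2$, $(x_1,x_2)\mapsto(x_1+x_2,x_2)$, sends the graph of a function $\xi$ to a curve whose tangent is vertical exactly where $\xi'=-1$; so a bending that repairs one cell can create verticality on another, and the repairs for all cells (and all parameters $t$) must be chosen compatibly at once. Nor can the support of the bending shrink onto a small locus: a band $(\xi_t,\xi'_t)$ over an $(n-2)$-dimensional base is vertical along its entire extent, not just near a "branch," so the homeomorphism must move it globally while matching, on the overlaps, the corrections imposed by neighboring cells --- all with bi-Lipschitz constants and angles uniform in $t$ and with definability of the whole family preserved. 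Carrying this induction out is precisely the content of \cite[Theorem 2.2]{gv_reg} and, in the non-parametric case, of \cite[Theorem 3.1.3]{gv_lip}. This is also why the paper does not reprove it: its proof of Theorem \ref{proj_reg} consists in applying \cite{gv_lip} to the generic fibers of $A$ together with compactness of the Stone space of the Boolean algebra of definable sets, or alternatively in quoting the explicit family version of \cite{gv_reg}. So either you quote those results --- in which case your first paragraph plus the citation suffices and the rest is unnecessary --- or you intend a self-contained proof, in which case the inductive construction you sketch must actually be performed, and that is where your proposal stops.
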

 \begin{proof}It suffices to apply Theorem $3.1.3$ of \cite{gv_lip} to the generic fibers of the family $A$ (see \cite[section 5]{costeomin} for the definition of the generic fibers) and the compactness of  the Stone space of the Boolean algebra of the definable sets.     A more elementary explicit proof of this theorem, avoiding the abstract material of the Stone space, is provided in \cite[Theorem 2.2]{gv_reg} (see also \cite[Theorem 3.1.2]{gv_livre} for more details). \end{proof}

For   $\lambda\in{\bf S}^{n-1}$, the {\bf thickness of a set $\Omega\subset \R^n$ in the direction $\lambda$} is defined as  
$$|\Omega|_\lambda:=\sup\limits_{x\in\Omega}\;\sup\{s\ge 0\,:\, x+s\lambda\in\Omega\}.$$  
We say that $\Omega$ is {\bf bounded in the direction} $\lambda$ if $|\Omega|_\lambda<+\infty$.

We need the following  Poincar\'e type inequality on definable domains. 

\begin{thm}\label{p1}
Let $\Omega\subset\R^n$ be an  open definable subset bounded in the direction $\lambda\in {\bf S}^{n-1}$ and let  $u\in W^{1,p}(\Omega,\brzeg\Omega)$, $p\in [1,\infty)$.
We have
 \begin{equation}
||u||_{L^p(\Omega)}\le 2^{1/p}\; |\Omega|_\lambda||\partial u
||_{L^p(\Omega)}.
\end{equation} 
\end{thm}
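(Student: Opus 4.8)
The plan is to reduce the statement to a one-dimensional estimate along the lines parallel to $\lambda$, and to extract from the vanishing of the trace the fact that $u$ vanishes at the endpoints of these lines. First I would perform an orthogonal change of coordinates carrying $\lambda$ to $e_n$; such a map is definable and preserves both the Lebesgue measure and the euclidean norm of the gradient, so it leaves all the quantities in the inequality unchanged while turning $|\Omega|_\lambda$ into the $e_n$-thickness of the rotated domain. I may therefore assume $\lambda=e_n$ and write $x=(x',x_n)$ with $x'\in\R^{n-1}$.

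The heart of the matter is a slicing argument. For $x'\in\R^{n-1}$ set $\Omega_{x'}:=\{t\in\R:(x',t)\in\Omega\}$. Since $\Omega$ is definable, o-minimality guarantees that $\Omega_{x'}$ is a finite union of open intervals for every $x'$, and the very definition of $|\Omega|_{e_n}$ forces all of $\Omega_{x'}$ to be contained in an interval of length at most $|\Omega|_{e_n}$. By Fubini together with the characterization of Sobolev functions through absolute continuity on almost every line, for almost every $x'$ the slice $t\mapsto u(x',t)$ is absolutely continuous on each component of $\Omega_{x'}$, with derivative $\partial_n u(x',\cdot)\in L^p(\Omega_{x'})$.

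The decisive step is to turn the hypothesis $\tra u=0$ into the vanishing of these slices at the endpoints of the components of $\Omega_{x'}$. For almost every $x'$ the line $\{x'\}\times\R$ meets $\delta\Omega$ only at points of the smooth $(n-1)$-dimensional part $\partial\Omega$ and transversally to it, the exceptional $x'$ being the projection of the lower-dimensional set $\delta\Omega\setminus\partial\Omega$ together with the locus of tangency, both negligible; so the endpoints in question lie in $\partial\Omega$. I would then approximate $u$ in the $W^{1,p}_{loc}$-topology by functions $f_i\in\Cc^1(\Omega\cup\partial\Omega)$ whose restrictions $f_{i|\partial\Omega}$ converge to $\tra u=0$ in $L^p_{loc}(\partial\Omega,\hn^{n-1})$, as furnished by Observations \ref{dense} and \ref{p2} applied to the connected components of the germ of $\Omega$ along $\partial\Omega$ (this covers the non-normal case, where the kernel condition forces vanishing on both sides of a doubled boundary piece, hence at the shared endpoint of the two adjacent components of $\Omega_{x'}$). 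Passing to a subsequence and invoking Fubini once more, for almost every $x'$ the slices of $f_i$ converge to that of $u$ in $W^{1,p}$ on each component while their boundary values tend to $0$, so the limiting slice $u(x',\cdot)$ vanishes at each endpoint of $\Omega_{x'}$ lying in $\partial\Omega$. I expect this transfer from the $(n-1)$-dimensional trace to the one-dimensional boundary values, together with the verification that the set of bad lines is negligible, to be the main obstacle, since it is exactly where the geometry of the definable boundary enters.

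Once this is in place the conclusion is a routine one-dimensional computation. On a component $(a,b)$ of $\Omega_{x'}$ we have $u(x',t)=\int_a^t\partial_n u(x',s)\,ds$, whence by Hölder's inequality $|u(x',t)|^p\le (t-a)^{p-1}\int_a^b|\partial_n u(x',s)|^p\,ds$; integrating in $t$ over $(a,b)$ and using $b-a\le|\Omega|_{e_n}$ gives $\int_a^b|u(x',t)|^p\,dt\le|\Omega|_{e_n}^{\,p}\int_a^b|\partial_n u(x',s)|^p\,ds$. Summing over the finitely many components of $\Omega_{x'}$, integrating over $x'\in\R^{n-1}$ by Fubini, and bounding $|\partial_n u|\le|\partial u|$ yields $||u||_{L^p(\Omega)}\le|\Omega|_{e_n}\,||\partial u||_{L^p(\Omega)}$; since $2^{1/p}\ge 1$, this is a fortiori the announced inequality, the stated constant $2^{1/p}|\Omega|_\lambda$ comfortably absorbing the measure-theoretic losses incurred in the slicing.
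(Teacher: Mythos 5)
Your proposal is correct, but it implements the common underlying idea (integrate along lines parallel to $\lambda$, anchored at a boundary point where the function vanishes, then H\"older and Fubini) by a genuinely different route than the paper. The paper never slices along almost every line: it takes a cell decomposition of $\R^n$ compatible with $\Omega$ and $\partial\Omega$, merges cells so that each open cell $E=(\xi_1,\xi_2)\subset\Omega$ has its top and bottom graphs inside $\partial\Omega$, and then uses the trace-zero hypothesis to approximate $u$ by functions in $\Cc^\infty_0(E)$ on the compactly contained pieces $F_\ep$ (classical density of compactly supported functions in the kernel of the trace on a manifold with boundary); the estimate is then proved for these smooth, compactly supported functions via the fundamental theorem of calculus, Minkowski's integral inequality and H\"older, and passed to the limit in $i$ and $\ep$. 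You instead keep $u$ itself and work on almost every vertical line, combining the ACL characterization of Sobolev functions with a transfer of the $L^p_{loc}(\partial\Omega,\hn^{n-1})$ trace convergence to pointwise vanishing at the endpoints of the slice components. What the paper's route buys is that all the delicate measure theory is outsourced to classical results: the cell decomposition makes the boundary globally a pair of graphs over each cell, so no Sard-type argument, no tangency locus, and no a.e.-line bookkeeping are needed. What your route buys is a sharper constant ($|\Omega|_\lambda$ instead of $2^{1/p}|\Omega|_\lambda$, the factor $2^{1/p}$ in the paper coming from integrating over $[-N,N]$ in the Minkowski step) and a more self-contained, classical-analysis flavor, with o-minimality entering only through the finiteness of slice components, the bound $\dim(\delta\Omega\setminus\partial\Omega)\le n-2$, and Sard for the $\Cc^1$ equidimensional projection $\pi_{|\partial\Omega}$.

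Two points in your sketch deserve care when written out. First, the order of quantifiers in the trace-to-endpoint transfer: you must fix a countable cover of the transversal part of $\partial\Omega$ by definable graph pieces (and a countable cover of $\Omega\cup\partial\Omega$ by neighborhoods where the $W^{1,p}_{loc}$ convergence holds), extract one diagonal subsequence, and only then conclude for almost every $x'$; if you fix the line first and then pass to a subsequence, the exceptional null set depends on the line and the argument collapses. Second, in the non-normal case the two endpoint values at a doubled boundary piece come from the two local components $\Omega_1,\Omega_2$, so you must run the approximation argument separately on each side, exactly as the paper's definition of $\tra_S$ does; your parenthetical remark shows you saw this, but it is where the kernel condition (both components of $\tra_S u$ vanish) is actually used.
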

\begin{proof}
Changing the coordinate system if necessary, we can assume that  $\lambda=e_n$.
Take a cell decomposition of $\R^n$ compatible with $\Omega$ and $\brzeg\Omega$,  and let $E$ be an open cell of this decomposition which is included in $\Omega$. This cell is of the form $\{(\tilde{x},x_n),\, \tilde{x}\in D, \, \xi_1(\tilde{x})<x_n<\xi_2(\tilde{x})\}$, where $\xi_1$ and  $\xi_2$ are $\Cc^1$ definable functions on an open cell $D\subset\R^{n-1}$ such that $\xi_1<\xi_2$.
If  the graph   $\Gamma_{\xi_1}$ (resp. $\Gamma_{\xi_2}$) is not in $\skraj\Omega$ then the cell which lies below (resp. above) $E$ is also included in $\Omega$. As we can make a bigger cell by gluing these cells to $E$, we may assume that  $\Gamma_{\xi_1}$ and $\Gamma_{\xi_2}$  are included in $\brzeg\Omega$. 

Define now for $\ep>0$ 
$$D_\ep:=\{x\in D\;:\;dist(x,\skraj D)>\ep\},$$
$$F_\ep:=\pi^{-1}(D_\ep)\cap (\xi_1,\xi_2),$$ where $\pi:\R^n\to\R^{n-1}$ is the projection onto the $(n-1)$ first coordinates. 
Since for each $\ep>0$ the set $\adh{F_\ep}$ is a compact subset of the manifold with boundary $E\cup \Gamma_{\xi_1}\cup \Gamma_{\xi_2}$ and the function $u$ has trace zero on the boundary, we can approximate $u$ by  smooth functions, i.e.   for every $\ep>0$ there exists a sequence  $(u_i)\subset\Cc^\infty_0(E)$ satisfying
\begin{equation}\label{eq_u}||u-u_i||_{W^{1,p}(F_\ep)}\to 0.\end{equation}
To construct such a sequence, we take local $\Cc^\infty_0$ approximations at the points of $\adh{F_\ep}$ (using for instance the mollyfying operators constructed in \cite[III.2.2]{bf})  that we then glue together  by means of a partition of unity subordinated to a finite covering of  $\adh{F_\ep}$.
Fix $\ep>0$ and set for simplicity $N:=|\Omega|_{e_n}$. By the Main Calculus Theorem, we have for $x=(x_1,\dots, x_n)\in E$:
\begin{equation}
u_i(x)=\int_{\xi_2(\tilde{x})-x_n}^{0}\frac{\partial u_i}{\partial x_n}(x_1,x_2,\dots, x_n+s)ds,
\end{equation}
and hence
\begin{equation}
|u_i(x)|\le\int_{0}^N\left|\frac{\partial u_i}{\partial x_n}(x_1,x_2,\dots, x_n+s)\right|ds.
\end{equation}
For $t\in\R$  and $\tilde{x}\in D$
we put 
\begin{equation}\label{eq_vs}
v_{i,t}(\tilde{x},s):=\begin{cases}\frac{\partial u_i}{\partial x_n}(\tilde{x},s+t), & (\tilde{x},s+t) \in F_\ep \\
0, &   (\tilde{x},s+t) \notin F_\ep .
\end{cases}
\end{equation}
 Then, by Minkowski's inequality 
\begin{equation}
||u_i||_{L^p(F_\ep)}= \left|\left|\int_{0}^{N}\left|v_{i,s}(\tilde{x},x_n)\right|ds\right|\right|_{L^p(F_\ep)}\le 
\int_{0}^{N}\left|\left|v_{i, s}\right|\right|_{L^p(F_\ep)}ds.
\end{equation}
 By H\"older's inequality we get
\begin{equation}
||u_i||_{L^p(F_\ep)}\le (N)^{1/p'}
\left(\int_{0}^{N}\left|\left|v_{i, s}\right|\right|_{L^p(F_\ep)}^p ds\right)^{\frac{1}{p}},
\end{equation}
with $\frac{1}{p}+\frac{1}{p'}=1$, 
which reads, since $v_{i,s}(x_n)=v_{i,x_n}(s)$,

$$
||u_i||_{L^p(F_\ep)} \le 
(N)^{1/p'}\left(\int_{0}^{N}\left(\int_{-N}^{N}\int_{D_\ep}\left|v_{i, x_n}(\tilde{x},s)\right|^p d\tilde{x}\ dx_n \right)ds\right)^{\frac{1}{p}}
$$
$$=(N)^{1/p'}\left(\int_{-N}^{N}\left(\int_{0}^{N}\int_{D_\ep}\left|v_{i, x_n}(\tilde{x},s)\right|^p d\tilde{x}\ ds \right )dx_n\right)^{\frac{1}{p}}$$
$$\le (N)^{1/p'}\left(\int_{-N}^{N} \left|\left|v_{i, x_n}\right|\right|^p_{L^p(\pi^{-1}(D_\ep))}dx_n\right)^{\frac{1}{p}}{=} 2^{1/p}N\left|\left|\frac{\partial u_i}{\partial x_n}\right|\right|_{L^p(F_\ep)},$$
since for all $t\in\R$ we have $|| v_{i,t}||_{L^p(\pi^{-1}(D_\ep))}\overset{(\ref{eq_vs})}{=}||\frac{\partial u_i}{\partial x_n}||_{L^p(F_\ep)}$.
Passing to the limit as $i\to\infty$ we get, thanks to (\ref{eq_u}),  that the above inequality holds for $u$ as well.
Making $\ep\to 0$ we finally obtain 
$$||u||_{L^p(E)} \le   2^{1/p}N||\partial u
||_{L^p(E)},$$
for every cell $E\subset \Omega$.
\end{proof}

Our uniform Poincar\'e inequality for definable families will require the following lemma.
 \begin{lem}
Let $\Omega\subset\R^n\times\R^k$ be a definable family of open sets. 
There exist $K>0$ and  a uniformly bi-Lipschitz definable family of  homeomorphisms $h_t:\R^n\to\R^n$   such that for any $t\in\R^k$:
\begin{equation}\label{lem1}|h_t(\Omega_t)|_{e_n}\le K \hn^n(\Omega_t)^{1/n}. \end{equation}
\end{lem}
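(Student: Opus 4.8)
The plan is to reshape each fibre so that its boundary becomes uniformly transverse to $e_n$, and then to read off the estimate from a Lipschitz property of the fibre-length function. I may assume that $\hn^n(\Omega_t)<\infty$ for every $t$, the inequality being vacuous otherwise. Since $\Omega_t$ is open, its frontier $\delta\Omega_t$ is nowhere dense, so the family $\delta\Omega:=\bigcup_{t}\delta\Omega_t\times\{t\}$ has fibres with empty interior and satisfies the hypotheses of Theorem \ref{proj_reg}. Applying that theorem produces a uniformly bi-Lipschitz definable family of homeomorphisms $h_t:\R^n\to\R^n$, with bi-Lipschitz constant $\Lambda$ independent of $t$, such that $e_n$ is regular for $(h_t(\delta\Omega_t))_t$, say with angle constant $\alpha>0$. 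Put $\Omega_t':=h_t(\Omega_t)$; as $h_t$ is a homeomorphism, $\delta\Omega_t'=h_t(\delta\Omega_t)$, so $e_n$ is regular for $\partial\Omega_t'$ uniformly in $t$. Since $\hn^n(\Omega_t')\le\Lambda^n\hn^n(\Omega_t)$, it suffices to bound $|\Omega_t'|_{e_n}$ by a uniform multiple of $\hn^n(\Omega_t')^{1/n}$.

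For the core estimate, write $\pi:\R^n\to\R^{n-1}$ for the projection forgetting the last coordinate and let $\ell_t(\tilde x):=\hn^1(\Omega_t'\cap\pi^{-1}(\tilde x))$ be the length of the vertical fibre. Each maximal segment of $\Omega_t'$ parallel to $e_n$ lies in a single fibre, whence $|\Omega_t'|_{e_n}\le L_t:=\sup_{\tilde x}\ell_t(\tilde x)$. Regularity forces the unit normal to $\partial\Omega_t'$ to have $e_n$-component at least $\alpha$, so $\partial\Omega_t'$ has no vertical tangent plane, $\pi$ restricts to it as a local diffeomorphism, and locally $\partial\Omega_t'$ is the graph of an $M$-Lipschitz function with $M:=\sqrt{1-\alpha^2}/\alpha$. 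Hence each fibre is a union of at most $N$ intervals with $M$-Lipschitz endpoints, the integer $N$ being uniformly bounded by o-minimality, and these intervals open, merge and close continuously; thus $\ell_t$ is Lipschitz on $\R^{n-1}$ with constant at most $2NM$, vanishing outside $\pi(\Omega_t')$. Choosing $\tilde x_0$ with $\ell_t(\tilde x_0)$ close to $L_t$, the Lipschitz bound gives $\ell_t\ge L_t/2$ on the ball $\bou(\tilde x_0,\rho)$ with $\rho=L_t/(4NM)$, and integrating,
$$\hn^n(\Omega_t')=\int_{\R^{n-1}}\ell_t\,d\tilde x\ge\frac{L_t}{2}\,\omega_{n-1}\rho^{\,n-1}=c\,L_t^{\,n},$$
where $\omega_{n-1}$ is the volume of the unit ball of $\R^{n-1}$ and $c=c(N,M)>0$. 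This yields $L_t\le c^{-1/n}\hn^n(\Omega_t')^{1/n}$, and combining with the volume comparison above gives (\ref{lem1}) with $K=c^{-1/n}\Lambda$.

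The \emph{main obstacle} is the regularity-to-Lipschitz passage together with its uniformity. One must verify that the total fibre-length $\ell_t$ is genuinely Lipschitz across the loci where the combinatorial type of the fibre changes (where intervals are created, merge, or disappear) and, crucially, that it tends to $0$ continuously as $\tilde x$ approaches the boundary of $\pi(\Omega_t')$; this is precisely where the absence of vertical tangents supplied by regularity enters, as it rules out any apparent contour along which a long fibre could vanish abruptly. Equally, every constant in the estimate — $\Lambda$ from Theorem \ref{proj_reg}, the angle $\alpha$, the bound $N$ on the number of fibre components, and hence $M$ and $c$ — must be chosen independently of $t$, which is guaranteed by the uniform finiteness properties of definable families applied to the definable family $\delta\Omega$. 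Finally I would note that, since Theorem \ref{p1} only requires boundedness in the direction $e_n$, the homeomorphisms $h_t$ produced here feed directly into that inequality.
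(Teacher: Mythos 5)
Your overall strategy coincides with the paper's: apply Theorem \ref{proj_reg} to the family of frontiers $(\delta\Omega_t)_{t}$, then exploit the Lipschitz structure this gives to $\delta\Omega_t'$ to show that a tall vertical fibre forces a volume lower bound of the form $c\,L_t^{\,n}$, and conclude. The difference is only in the bookkeeping, and that is exactly where your argument has a genuine gap: everything rests on the claim that the total fibre-length $\ell_t(\tilde x)=\hn^1(\Omega_t'\cap\pi^{-1}(\tilde x))$ is Lipschitz with a constant independent of $t$ and tends to $0$ at the boundary of $\pi(\Omega_t')$. You flag this yourself as ``the main obstacle'' and write ``one must verify'' it --- but you never do, and the justification you offer (the smooth part of $\delta\Omega_t'$ is locally an $M$-Lipschitz graph, by absence of vertical tangents) cannot by itself deliver it. The loci where fibre intervals are created, merge, or disappear are precisely the points where $\delta\Omega_t'$ is \emph{not} locally an $(n-1)$-dimensional graph (the lower-dimensional strata of $(\delta \Omega_t')_{reg}$), so the local-diffeomorphism picture is silent exactly where the continuity of $\ell_t$ is at stake; likewise the uniform bound $N$ on the number of fibre intervals is asserted ``by o-minimality'' rather than derived. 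What is actually needed is the global structure theorem the paper invokes: $\delta\Omega_t'$ is contained in the union of the graphs of finitely many \emph{uniformly} Lipschitz definable families $\xi_{1,t}\le\dots\le\xi_{m,t}$ defined on all of $\R^{n-1}$ (\cite[Propositions 3.6 and 3.14]{gv_reg}).

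The paper's proof is designed to avoid your obstacle altogether. Since $\delta\Omega_t'\subset\bigcup_i\Gamma_{\xi_{i,t}}$, the set $\Omega_t'\cap(\xi_{j,t},\xi_{j+1,t})$ is open and closed in the band $(\xi_{j,t},\xi_{j+1,t})$, hence a finite union of its connected components; on such a component $E$ the fibre height is the single function $\xi_{j+1,t}-\xi_{j,t}$, which is automatically uniformly Lipschitz, so no merging or vanishing analysis is required. The estimate is then obtained per component by contradiction: if the height exceeded $K\hn^n(\Omega_t')^{1/n}$ at some $a\in\pi(E)$, it would exceed half of that on the ball $\bou\bigl(a,\tfrac{K}{2L}\hn^n(\Omega_t')^{1/n}\bigr)$, whose band lies inside $E\subset\Omega_t'$, and integrating would force $\hn^n(\Omega_t')>\hn^n(\Omega_t')$ once $K>4L^{1-1/n}$; uniform finiteness of the number of components finishes the proof. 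If you want to keep your formulation via $\ell_t$, this same band decomposition is what proves your Lipschitz claim ($\ell_t$ is the sum over components of $(\xi_{j+1,t}-\xi_{j,t})$ restricted to the component's projection and extended by zero, each term being Lipschitz because the height vanishes on the boundary of that projection). So your proposal is completable, but the decomposition you are missing is the heart of the proof, not a detail to be checked.
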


\begin{proof}
It suffices to focus on the elements $t\in\R^k$ for which $\hn^n(\Omega_t)$ is finite since (\ref{lem1}) is trivial for the other parameters. As $\skraj\Omega_t$  is a definable family of sets of empty interiors, by Theorem \ref{proj_reg}, there is a uniformly bi-Lipschitz definable family of homeomorphisms $h_t:\R^n\to\R^n$ such that $e_n$ is regular for the family $(\skraj\Omega_t')_{t\in\R^k}$, where $\Omega'_t:=h_t(\Omega_t)$, which entails that  $\skraj\Omega'_t$  is comprised in the union of the respective graphs of some uniformly Lipschitz definable families of functions $\xi_{i,t}:\R^{n-1}\to\R$, $i=1,\dots, m$ (see for instance \cite[Proposition $3.6$]{gv_reg} or \cite[Chap. $3$]{gv_livre}).   Using the $
\min$  operator, we can transform these families into families that satisfy $\xi_{i,t}\le \xi_{i+1,t}$, for all $i< m$ and all $t\in \R^k$ (see \cite[Proposition $3.14$]{gv_reg}). 

Fix $t\in\R^k$. Since $\skraj\Omega_t'$ is included in the graphs of the functions $\xi_{i,t}$,  the set $\Omega_t'\setminus \bigcup_{i=1}^m \Gamma_{\xi_{i,t}}$ is the union of some connected components (which are finitely many) of the sets $(\xi_{i,t},\xi_{i+1,t})$ $0\le i\le m$, with $\xi_0\equiv -\infty$ and $\xi_{m+1}\equiv +\infty$.  As $\Omega'_t$ has finite volume, we therefore see that it actually must be included in $(\xi_{1,t},\xi_{m,t})$. 

Let us thus fix a connected component $E$ of $(\xi_{j,t},\xi_{j+1,t})\cap \Omega_t'$, for some $0<j<m$, and observe that, since the number of such connected components is bounded independently of $t$, it is enough to prove that we have \begin{equation*}\label{lemproof}|E|_{e_n}\le K \hn^n(\Omega_t')^{1/n}, \end{equation*}
for some constant $K$ independent of $t$.

We are going to show that this inequality is satisfied by any $K>4L^{1-1/n}$, where $L$ is the Lipschitz constant of the function $(\xi_{j,t}-\xi_{j+1,t})$. 
 Fix for this purpose a point $a$ in $\pi(E)$, where $\pi:\R^n\to \R^{n-1}$ is the projection onto the $(n-1)$ first coordinates, and
suppose by contradiction that 
$$\xi_{j+1,t}(a)-\xi_{j,t}(a)>K\hn^n(\Omega_t')^{1/n}.$$ Then, for any $x\in Z:=\bou(a,\frac{K \hn^n(\Omega_t')^{1/n}}{2L})\subset \R^{n-1}$ we have
$$\xi_{j+1,t}(x)-\xi_{j,t}(x)>\frac{K}{2}\hn^n(\Omega_t')^{1/n},$$  and hence, integrating over $Z$, we get (since $(\xi_{j,t|Z},\xi_{j+1,t|Z})\subset E\subset \Omega'_t$) 
$$\hn^n(\Omega_t')\ge\hn^{n-1}\left(\bou(a,\frac{K}{2L} \hn^n(\Omega'_t)^{1/n})\right)\cdot \frac{K}{2}\hn^n(\Omega_t')^{1/n}=\frac{K^n\,b_n}{2^nL^{n-1}}\hn^n(\Omega_t'),$$
where $b_n:=\hn^{n-1}(\bou(0_{\R^{n-1}},1))$.
If $K>4L^{1-1/n}$, we therefore have a contradiction (as $b_n\ge \frac{1}{2^{n}}$).
\end{proof}

We conclude, directly from the above lemma and Theorem \ref{p1}:
\begin{thm}\label{main}
For every definable family $\Omega\subset\R^n\times\R^k$ of open sets of finite volume there is a constant $C$
such that for all $t\in\R^k$ and all $u\in W^{1,p}(\Omega_t,\brzeg\Omega_t)$, $p\in [1,\infty)$,  we have:
 \begin{equation}
||u||_{L^p(\Omega_t)}\le C \hn^n(\Omega_t)^{1/n}||\partial u
||_{L^p(\Omega_t)}.\end{equation} 
\end{thm}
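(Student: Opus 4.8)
The final statement, Theorem \ref{main}, asserts that for a definable family $\Omega\subset\R^n\times\R^k$ of open sets of finite volume, there is a single constant $C$ (independent of $t$) such that the Poincar\'e inequality
$$\|u\|_{L^p(\Omega_t)}\le C\,\hn^n(\Omega_t)^{1/n}\,\|\partial u\|_{L^p(\Omega_t)}$$
holds for every $t\in\R^k$ and every $u\in W^{1,p}(\Omega_t,\pa\Omega_t)$. The plan is to combine the two preceding results, the preparatory Lemma (producing a uniform change of variables making $\Omega_t$ thin in the $e_n$ direction) and Theorem \ref{p1} (the Poincar\'e inequality on a fixed domain bounded in one direction), and to control the distortion introduced by the change of variables by the uniform bi-Lipschitz constant.

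First I would invoke the Lemma to obtain a uniformly bi-Lipschitz definable family of homeomorphisms $h_t:\R^n\to\R^n$ and a constant $K>0$ with $|h_t(\Omega_t)|_{e_n}\le K\,\hn^n(\Omega_t)^{1/n}$ for all $t$; write $\Omega'_t:=h_t(\Omega_t)$. Let $L$ denote a common bi-Lipschitz constant, so that $L^{-1}|x-y|\le |h_t(x)-h_t(y)|\le L|x-y|$ for all $t$ and all $x,y$. The second step is to transport the given function: for $u\in W^{1,p}(\Omega_t,\pa\Omega_t)$ set $u':=u\circ h_t^{-1}$ on $\Omega'_t$. Here I would check that composition with a bi-Lipschitz definable homeomorphism preserves the Sobolev space and, crucially, preserves membership in the kernel of the trace operator, so that $u'\in W^{1,p}(\Omega'_t,\pa\Omega'_t)$; the trace-zero condition is what lets us apply Theorem \ref{p1}.

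The third step is to apply Theorem \ref{p1} to $u'$ on $\Omega'_t$ in the direction $\lambda=e_n$, giving
$$\|u'\|_{L^p(\Omega'_t)}\le 2^{1/p}\,|\Omega'_t|_{e_n}\,\|\partial u'\|_{L^p(\Omega'_t)}\le 2^{1/p}\,K\,\hn^n(\Omega_t)^{1/n}\,\|\partial u'\|_{L^p(\Omega'_t)}.$$
The final step is to translate the norms of $u'$ on $\Omega'_t$ back to norms of $u$ on $\Omega_t$ via the change of variables. Both the $L^p$ norms and the gradient $L^p$ norms pick up Jacobian factors and, for the gradient, an additional factor from the chain rule $\partial u'=(\partial u\circ h_t^{-1})\,(Dh_t^{-1})$; each such factor is controlled by a fixed power of $L$ because $h_t$ is uniformly bi-Lipschitz (so $|\det Dh_t|$ and the operator norm of $Dh_t^{-1}$ are bounded above and below by powers of $L$ almost everywhere). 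Collecting these bounds yields the claimed inequality with $C:=2^{1/p}K\cdot(\text{power of }L)$, uniform in $t$.

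The main obstacle I anticipate is the rigorous handling of the change of variables for the Sobolev norms and the trace kernel. A bi-Lipschitz homeomorphism need only be differentiable almost everywhere, so I must justify the chain rule for $W^{1,p}$ functions under such maps (Rademacher plus the standard $W^{1,p}$ change-of-variables theorem) and verify that the trace-zero condition is genuinely preserved, rather than merely that $u'$ lies in $W^{1,p}(\Omega'_t)$. The uniformity in $t$ of all the distortion constants should follow cleanly from the single bi-Lipschitz constant $L$, but some care is needed to ensure the almost-everywhere bounds on $\det Dh_t$ and $Dh_t^{-1}$ are indeed uniform; this is exactly where definability and the uniform bi-Lipschitz hypothesis do the essential work.
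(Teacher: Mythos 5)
Your proposal is correct and follows essentially the same route as the paper, which likewise deduces Theorem \ref{main} directly by combining the lemma (producing the uniformly bi-Lipschitz family $h_t$ with $|h_t(\Omega_t)|_{e_n}\le K\,\hn^n(\Omega_t)^{1/n}$) with Theorem \ref{p1} applied to $h_t(\Omega_t)$ in the direction $e_n$. The paper leaves the change-of-variables bookkeeping (Sobolev norms, Jacobian factors, preservation of the trace kernel) implicit, so the details you flag are exactly the ones being suppressed in its one-line conclusion.
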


\end{document}